\newtheorem{theorem}{Theorem}[section]
\newtheorem{lemma}[theorem]{Lemma}
\theoremstyle{definition}
\numberwithin{equation}{section}
\begin{document}

\title{Another proof of the analytic Hahn - Banach Theorem}

\author{ Sokol Bush Kaliaj }

\address{
Mathematics Department, 
Science Natural Faculty, 
University of Elbasan,
Elbasan, 
Albania.
}

\email{sokol\_bush@yahoo.co.uk}

\thanks{}

\subjclass[2010]{Primary 46A22; Secondary 46A32 }

\keywords{Analytic version, Hahn-Banach theorem, convex functionals.}

\begin{abstract}
In this paper we present another proof of the analytic version of the Hahn-Banach theorem in terms of convex functionals.
\end{abstract}

\maketitle

\section{Introduction and Preliminaries}

The Hahn-Banach theorem is a powerful existence theorem whose form is particularly
appropriate to applications in linear problems.
Its principal formulations are as a dominated
extension theorem (analytic form) and as a separation theorem (geometric form). 
The following theorem is the geometric version  
of the Hahn-Banach theorem, c.f. Theorem 3.1 in \cite{SCH}, p.46.
\begin{theorem}\label{T_Separation}
Let $L$ be a topological vector space, let $M$ be a linear manifold in $L$, 
and let $G$ be a non-empty convex, open subset of $L$, 
not intersecting $M$. 
There exists a closed hyperplane in $L$, containing $M$ and not intersecting $G$.
\end{theorem}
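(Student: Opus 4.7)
My plan is to obtain the desired hyperplane as the zero set of a continuous linear functional produced, in spirit, by the analytic dominated-extension form of Hahn--Banach; a self-contained alternative via Zorn's lemma is noted at the end.

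After translating by a point of $M$, I may assume $0\in M$, so $M$ is a linear subspace and still $M\cap G=\emptyset$. Fix any $x_0\in G$ and set $V=G-x_0$, an open convex neighbourhood of $0$ whose Minkowski functional $p$ is sublinear, continuous at $0$, and satisfies $\{p<1\}\subseteq V\subseteq\{p\leq 1\}$; replacing $V$ by $V\cap(-V)$ I may also take $p$ symmetric. On the subspace $N=M+\mathbb R x_0$, which is a direct sum because $x_0\notin M$, I would define $f_0(m+tx_0):=-t$. The key estimate $f_0\leq p$ on $N$ is trivial when $t\geq 0$; for $t<0$, writing $s=-t>0$ and using positive homogeneity of $p$, it reduces to $p(s^{-1}m-x_0)\geq 1$, i.e.\ $s^{-1}m-x_0\notin V$, i.e.\ $s^{-1}m\notin G$, which holds since $s^{-1}m\in M$ and $M\cap G=\emptyset$.

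Invoking analytic Hahn--Banach extends $f_0$ to a linear $f\colon L\to\mathbb R$ with $f\leq p$; continuity follows from continuity of $p$ at $0$ together with the symmetry $|f|\leq p$. The hyperplane $H=f^{-1}(0)$ is then closed, contains $M$ (since $f_0$ vanishes on $M$), and avoids $G$ because for every $g\in G$ one has $f(g)=f(g-x_0)+f(x_0)\leq p(g-x_0)-1<0$. Translating back by $-m_0$ recovers the original problem with the hyperplane $\{f=f(m_0)\}$.

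The principal obstacle is that the present paper is devoted to proving analytic Hahn--Banach, so the appeal above would be logically circular within it. A self-contained alternative is to apply Zorn's lemma directly to the family $\mathcal F$ of linear manifolds containing $M$ and disjoint from $G$, ordered by inclusion: any chain is bounded above by its union, and maximality of a resulting element $H$ yields closedness at once, since $G$ is open forces $\overline H\cap G=\emptyset$ and the closure of a linear manifold is a linear manifold. The delicate step then becomes showing $H$ has codimension one, which is where a genuine convexity argument enters: in the quotient TVS obtained after translating $H$ to pass through $0$, the image of $G$ is an open convex set missing the origin, and one must verify the two-dimensional lemma that any such set is disjoint from some line through $0$ (via the proper convex cone it generates), which pulled back would otherwise enlarge $H$ and contradict maximality.
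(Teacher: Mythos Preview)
The paper does not prove Theorem~\ref{T_Separation}; it is quoted from Schaeffer \cite{SCH} as an input, and the entire point of the paper is to derive the analytic form (Lemma~\ref{L_Dominated1.2} and Theorem~\ref{T_Dominated1}) \emph{from} it. So there is no in-paper proof to compare your proposal against.

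Your own diagnosis of the circularity is therefore exactly right and is the decisive issue: your first argument feeds the analytic extension theorem back into the geometric one, which in this paper's logical order would beg the question. The Zorn's-lemma alternative you sketch at the end is the standard route (and is essentially what one finds in \cite{SCH}): maximize among linear manifolds containing $M$ and disjoint from $G$, observe that a maximal element is closed since $G$ is open, pass to the quotient, and reduce to the two-dimensional fact that an open convex set missing the origin misses an entire line through the origin. You have located the genuine content correctly; that last step needs to be carried out, not just named, for the argument to be complete.

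One technical slip in your first argument is worth flagging regardless: once you replace $V=G-x_0$ by $V\cap(-V)$ to make $p$ symmetric, the final estimate $p(g-x_0)<1$ for $g\in G$ no longer follows, since membership in the symmetrized set would also require $2x_0-g\in G$. The clean fix is to keep the original, merely sublinear, $p$ and deduce continuity of $f$ from $-p(-x)\le f(x)\le p(x)$; but then you need the sublinear version of the extension theorem (precisely the paper's Lemma~\ref{L_Dominated1.2}), and the circularity within this paper is unchanged.
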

The next theorem is the analytic version, c.f. Theorem 3.2 in \cite{SCH}, p.47.
\begin{theorem}\label{T_Dominated}
Let $M$ be a subspace of a vector space $L$ over $\Phi$ , 
let $p : L \to [0,+\infty)$  be a semi-norm. 
If $g: M \to \Phi$ is a linear functional such that 
$g$ "dominated" by $p$, i.e., 
$$
(\forall x \in M)[|g(x)| \leq p(x)],
$$
then there exists a linear functional $f: L \to \Phi$ such that
\begin{itemize}
\item
$f$ extends $g$ to $L$, i.e., 
$$
f \vert_{M} = g,
$$
\item
$f$ "dominated" by $p$, i.e., 
$$
(\forall x \in L)[|f(x)| \leq p(x)].
$$
\end{itemize}
\end{theorem}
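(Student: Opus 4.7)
The plan is to deduce the analytic version from the geometric separation theorem (Theorem \ref{T_Separation}), via the semi-norm topology on $L$ and the standard complex-to-real reduction.

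First, I would equip $L$ with the topology induced by the semi-norm $p$, which turns $L$ into a (not necessarily Hausdorff) topological vector space. In this topology the set $G := \{x \in L : p(x) < 1\}$ is convex, open, and contains $0$.

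Next, I would handle the real case $\Phi = \mathbb{R}$. One may assume $g \not\equiv 0$, since otherwise $f \equiv 0$ works. Set $M_0 := \{x \in M : g(x) = 1\}$, a non-empty linear manifold; for $x \in M_0$ one has $p(x) \geq |g(x)| = 1$, so $M_0 \cap G = \emptyset$. By Theorem \ref{T_Separation} there is a closed hyperplane $H \supset M_0$ with $H \cap G = \emptyset$. Since $0 \in G$ lies outside $H$, one may write $H = \{x \in L : f(x) = 1\}$ for a (necessarily continuous) linear functional $f : L \to \mathbb{R}$. I would then check (i) $f\vert_M = g$: equality holds on $M_0$ by construction; for $x \in M$ with $g(x) = t \neq 0$, scale via $x/t \in M_0$; for $x \in M$ with $g(x) = 0$, pick any $y \in M_0$, so that $x+y \in M_0$, giving $f(x+y) = 1 = f(y)$ and hence $f(x) = 0$. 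And (ii) $|f(x)| \leq p(x)$: since $G$ is convex and disjoint from $H$ while containing $0 \notin H$, the functional $f$ satisfies $f < 1$ on $G$; scaling yields $f(x) \leq p(x)$, and applying the same to $-x$ gives $|f(x)| \leq p(x)$.

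For $\Phi = \mathbb{C}$, I would invoke the Bohnenblust--Sobczyk device: write $g(x) = g_1(x) - i\, g_1(ix)$ with $g_1 := \operatorname{Re} g$, apply the real case to extend $g_1$ to a real-linear $f_1 : L \to \mathbb{R}$ dominated by $p$, and set $f(x) := f_1(x) - i\, f_1(ix)$; routine checks show $f$ is $\mathbb{C}$-linear and extends $g$, while $|f(x)| \leq p(x)$ follows by writing $f(x) = |f(x)| e^{i\theta}$ and evaluating $f_1(e^{-i\theta}x) = |f(x)| \leq p(e^{-i\theta} x) = p(x)$.

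The main delicate point is the verification that $f = g$ on $\ker(g) \cap M$, which is why the dichotomy $g \equiv 0$ vs.\ $g \not\equiv 0$ (ensuring $M_0 \neq \emptyset$) is needed. A secondary issue is justifying that the separating hyperplane is of the form $f^{-1}(1)$ with a \textit{continuous} $f$, which is exactly what closedness in the $p$-topology guarantees, and that the one-sidedness $f < 1$ on $G$ (used for the domination bound) follows from convexity of $G$ together with $G \cap H = \emptyset$ and $0 \in G$.
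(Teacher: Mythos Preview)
Your argument is correct, but it follows a genuinely different route from the paper's own proof (Lemma~\ref{L_Dominated1.2} together with Theorem~\ref{T_Dominated1}). Both proofs reduce the analytic form to the geometric separation Theorem~\ref{T_Separation} and then pass from $\mathbb{R}$ to $\mathbb{C}$ via the Bohnenblust--Sobczyk device, so the overall architecture is the same. The difference lies in \emph{where} the separation is carried out. You work directly in $L$: the open convex set is the $p$-unit ball $\{p<1\}$ and the linear manifold is the affine level set $M_0=g^{-1}(1)\subset M$, which forces the case split $g\equiv 0$ versus $g\not\equiv 0$ to ensure $M_0\neq\emptyset$. The paper instead lifts everything to $L\times\mathbb{R}$: the manifold is $H=\{(x,t)\in M\times\mathbb{R}: g(x)-t=1\}$ and the open convex set is the epigraph-type region $G=\{(x,t): \varphi(x)-t<1\}$; since $(0,-1)\in H$ always, no case distinction is needed, and the separating hyperplane in $L\times\mathbb{R}$ automatically has the form $\{f(x)-t=1\}$, from which the extension $f$ is read off. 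The paper also invests in Lemma~\ref{L_Dominated1.1} to upgrade upper semicontinuity of $\varphi$ to continuity, whereas in your setup openness of $\{p<1\}$ in the $p$-topology is immediate. Your approach is shorter and stays in the original space; the paper's epigraph approach trades an extra dimension for a cleaner, case-free argument and is phrased for positively homogeneous convex $\varphi$ rather than just seminorms.
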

Theorem \ref{T_Separation} implies Theorem \ref{T_Dominated}, c.f. \cite{SCH}. 
Mazur \cite{MAZ} deduced the geometric form from the analytic form.  
In this paper, we
shall present another prove of the analytic form of the Hahn-Banach Theorem.

In the paper \cite{HIR}  N.~Hirano proved a generalization of the Hahn-Banach theorem.
Kakutani \cite{KUK} gave a proof of the Hahn-Banach extension theorem by
using the Markov-Kakutani fixed-point theorem. 
There are also interesting  generalizations of the Hahn-Banach theorem in the papers 
\cite{CAK}, \cite{PLE}, \cite{RAN} and \cite{SIM}.

Throughout this paper, $L$ denotes a vector space over $\Phi$. 
The scalar field $\Phi$ is the real field $\mathbb{R}$ or the complex field $\mathbb{C}$. 
A function $p : L \to [0,+\infty)$  is said to be a \textit{semi-norm} if  
for every $x, y \in L$ and $\lambda \in \Phi$, we have
\begin{itemize}
\item
$p(x + y ) \leq p(x) + p(y)$,
\item
$p(\lambda x) = |\lambda| p(x)$. 
\end{itemize}
A function $\varphi : L \to \mathbb{R}$ is said to be a 
\textit{convex functional} if 
$0< \lambda < 1$ implies 
$$
\varphi(\lambda x + (1-\lambda) y ) \leq 
\lambda \varphi(x) + (1-\lambda) \varphi(y)
$$ 
for all $(x,y) \in L \times L$. 
Clearly, if $p$ is a semi-norm, then it is the convex functional.

\section{The Main Result}

We present another proof of the analytic form of the Hahn-Banach theorem, 
Theorem \ref{T_Dominated1}.
Let us start with the following auxiliary lemma.

\begin{lemma}\label{L_Dominated1.1} 
Let $L$ be a topological vector space over $\mathbb{R}$, 
let $\mathfrak{B}$ be the family of all circled $0$-neighborhood and  
let $\varphi : L \to \mathbb{R}$  be a convex functional. 
Then the following statements are equivalent:
\begin{itemize}
\item[(i)]
$\varphi$ is continuous on $L$,
\item[(ii)]
$\varphi$ is upper semi-continuous on $L$.
\end{itemize}
\end{lemma}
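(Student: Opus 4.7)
The implication (i) $\Rightarrow$ (ii) is immediate from the definitions, so the whole content lies in proving that upper semi-continuity of the convex functional $\varphi$ forces it to be continuous. The plan is to fix an arbitrary point $x_0 \in L$ and $\varepsilon > 0$, and then to produce a circled $0$-neighborhood $V \in \mathfrak{B}$ such that $|\varphi(x_0+h) - \varphi(x_0)| < \varepsilon$ whenever $h \in V$. One direction of this estimate, namely $\varphi(x_0+h) < \varphi(x_0) + \varepsilon$, is handed to us by upper semi-continuity together with the fact that the circled $0$-neighborhoods form a base at $0$ in a topological vector space: we can choose $V \in \mathfrak{B}$ so that $\varphi$ is bounded above by $\varphi(x_0)+\varepsilon$ on $x_0 + V$.

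The nontrivial direction, namely the lower bound $\varphi(x_0+h) > \varphi(x_0) - \varepsilon$, is obtained from convexity via a symmetric reflection trick. Writing
\[
x_0 \;=\; \tfrac{1}{2}(x_0+h) + \tfrac{1}{2}(x_0-h),
\]
convexity gives
\[
\varphi(x_0) \;\leq\; \tfrac{1}{2}\varphi(x_0+h) + \tfrac{1}{2}\varphi(x_0-h),
\]
so that $\varphi(x_0+h) \geq 2\varphi(x_0) - \varphi(x_0-h)$. Because $V$ is circled, $h \in V$ implies $-h \in V$, hence $x_0-h \in x_0+V$ as well, and the upper estimate from the previous paragraph applies also to $\varphi(x_0-h)$. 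Substituting, we get $\varphi(x_0+h) > 2\varphi(x_0) - (\varphi(x_0)+\varepsilon) = \varphi(x_0) - \varepsilon$, which is the missing inequality.

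Combining the two bounds yields $|\varphi(x_0+h) - \varphi(x_0)| < \varepsilon$ for all $h \in V$, i.e., $\varphi$ is continuous at $x_0$. Since $x_0 \in L$ was arbitrary, $\varphi$ is continuous on $L$. The only place where one has to be a bit careful is justifying that the upper semi-continuity neighborhood may be taken from $\mathfrak{B}$, which is routine because balanced (circled) $0$-neighborhoods form a base in any topological vector space; beyond that, the argument is essentially a two-line convexity computation and contains no real obstacle.
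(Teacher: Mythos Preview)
Your proof is correct and follows essentially the same approach as the paper: both use upper semi-continuity on a circled $0$-neighborhood to control $\varphi(x_0\pm h)$ from above, then apply the midpoint convexity inequality $2\varphi(x_0)\le \varphi(x_0+h)+\varphi(x_0-h)$ to convert this into the required lower bound on $\varphi(x_0+h)$. The only difference is cosmetic---you spell out the midpoint decomposition $x_0=\tfrac12(x_0+h)+\tfrac12(x_0-h)$ explicitly, whereas the paper simply states the resulting inequality.
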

\begin{proof}
Clearly, if $\varphi$ is continuous on $L$, then $\varphi$ is upper semi-continuous on $L$. 
Conversely, assume that  $\varphi$ is upper semi-continuous at $x_{0} \in L$. 
Then, given $\varepsilon >0$ there exists $U_{\varepsilon} \in \mathfrak{B}$ such that 
\begin{equation}\label{eq_LDD.1}
u \in U_{\varepsilon}  
\Rightarrow 
\max \{\varphi(x_{0} + u), \varphi(x_{0} - u)\} < \varphi(x_{0}) + \varepsilon
\end{equation}
and since 
\begin{equation*}
2\varphi(x_{0}) \leq \varphi(x_{0} + u) + \varphi(x_{0} - u)
\end{equation*}
it follows that
\begin{equation*}
2\varphi(x_{0}) \leq \varphi(x_{0} + u) + \varphi(x_{0} - u) 
< 
2\varphi(x_{0}) + 2\varepsilon. 
\end{equation*}
The last result together with \eqref{eq_LDD.1} yields 
\begin{equation*}
u \in U_{\varepsilon}
\Rightarrow 
\varphi(x_{0}) - \varepsilon < \varphi(x_{0} + u) < \varphi(x_{0}) + \varepsilon.
\end{equation*}
This means that $\varphi$ is continuous at $x_{0}$ 
and the proof is finished.
\end{proof}

\begin{lemma}\label{L_Dominated1.2} 
Let $M$ be a subspace of a vector space $L$ over $\mathbb{R}$, 
let $\varphi : L \to \mathbb{R}$  be a convex functional such that  
for every $x \in L$ and $\lambda \geq 0$, we have
\begin{equation*}
\varphi(\lambda x) = \lambda \varphi(x). 
\end{equation*}
If $g: M \to \mathbb{R}$ is a linear functional such that 
$g$ "dominated" by $\varphi$, i.e., 
$$
(\forall x \in M)[g(x) \leq \varphi(x)],
$$
then there exists a linear functional $f: L \to \mathbb{R}$ such that
\begin{itemize}
\item[(i)]
$f$ extends $g$ to $L$, i.e., 
$$
f \vert_{M} = g,
$$
\item[(ii)]
$f$ "dominated" by $\varphi$, i.e., 
$$
(\forall x \in L)[f(x) \leq \varphi(x)].
$$
\end{itemize}
\end{lemma}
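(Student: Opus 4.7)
The plan is to adapt the classical Zorn's lemma argument for Hahn–Banach to the present setting, exploiting the fact that the hypotheses on $\varphi$ (convexity together with positive homogeneity) force $\varphi$ to be subadditive, so that $\varphi$ behaves like a sublinear functional.

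First I would record the subadditivity of $\varphi$. Given $x,y \in L$, convexity applied with $\lambda = 1/2$ yields
$$
\varphi\!\left(\tfrac{x+y}{2}\right) \leq \tfrac{1}{2}\varphi(x) + \tfrac{1}{2}\varphi(y),
$$
and multiplying by $2$ and using $\varphi(2z) = 2\varphi(z)$ gives $\varphi(x+y)\leq \varphi(x)+\varphi(y)$. This is the key structural fact that will make the proof go through exactly as in the sublinear case.

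Next I would set up Zorn's lemma on the family $\mathcal{F}$ of all pairs $(N,h)$ where $N$ is a subspace of $L$ containing $M$ and $h\colon N\to\mathbb{R}$ is a linear extension of $g$ with $h(x)\leq \varphi(x)$ for all $x\in N$, ordered by $(N_1,h_1)\preceq(N_2,h_2)$ iff $N_1\subseteq N_2$ and $h_2|_{N_1}=h_1$. Every chain admits an upper bound obtained by taking the union of the domains and the common extension, so Zorn produces a maximal element $(N^\ast,f^\ast)$.

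The heart of the proof is the one-dimensional extension step, which shows $N^\ast=L$. Suppose instead there exists $x_0\in L\setminus N^\ast$; I would extend $f^\ast$ to $N^\ast\oplus\mathbb{R}x_0$ by choosing a value $c=f^\ast(x_0)\in\mathbb{R}$ and defining $f^\ast(y+tx_0)=f^\ast(y)+tc$ for $y\in N^\ast$, $t\in\mathbb{R}$. Using positive homogeneity of $\varphi$, the requirement $f^\ast(y+tx_0)\leq\varphi(y+tx_0)$ splits into the two cases $t>0$ and $t<0$ and reduces to
$$
\sup_{y\in N^\ast}\bigl[f^\ast(y)-\varphi(y-x_0)\bigr]\;\leq\;c\;\leq\;\inf_{y\in N^\ast}\bigl[\varphi(y+x_0)-f^\ast(y)\bigr].
$$
Such a $c$ exists provided $f^\ast(y_1)+f^\ast(y_2)\leq \varphi(y_1-x_0)+\varphi(y_2+x_0)$ for all $y_1,y_2\in N^\ast$, and this follows from linearity of $f^\ast$, the domination $f^\ast\leq\varphi$ on $N^\ast$, and the subadditivity established above:
$$
f^\ast(y_1)+f^\ast(y_2)=f^\ast(y_1+y_2)\leq\varphi\bigl((y_1-x_0)+(y_2+x_0)\bigr)\leq\varphi(y_1-x_0)+\varphi(y_2+x_0).
$$
This contradicts maximality, so $N^\ast=L$ and $f:=f^\ast$ is the desired functional.

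The main obstacle is precisely the verification that a suitable $c$ exists; once subadditivity of $\varphi$ is in hand, the remainder is bookkeeping via Zorn's lemma. All other ingredients (defining the order on $\mathcal{F}$, handling chains, splitting the inequality by the sign of $t$) are routine.
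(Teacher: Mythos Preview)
Your argument is correct: once you observe that convexity together with positive homogeneity forces subadditivity, $\varphi$ is sublinear and the classical Zorn--lemma proof of the real Hahn--Banach theorem goes through verbatim. The one-step extension inequality you wrote down is exactly right and is justified by the subadditivity you derived.

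However, your route is \emph{genuinely different} from the paper's, and in fact it bypasses the paper's entire point. The paper's aim is precisely to give ``another proof'' of the analytic Hahn--Banach theorem by deducing it from the geometric separation theorem (Theorem~\ref{T_Separation}) rather than from the standard one-dimensional extension step. Concretely, the paper equips $L$ with the topology generated by the circled sets $V_n = U_n \cap (-U_n)$ where $U_n = \{x:\varphi(x)<1/n\}$, checks via Lemma~\ref{L_Dominated1.1} that $\varphi$ is continuous for this topology, and then works in $L\times\mathbb{R}$: the graph-type hyperplane $H=\{(x,t)\in M\times\mathbb{R}:g(x)-t=1\}$ is disjoint from the open convex set $G=\{(x,t):\varphi(x)-t<1\}$, so Theorem~\ref{T_Separation} yields a closed hyperplane $H_1\supset H$ missing $G$, from which the extension $f$ is read off. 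What your approach buys is a self-contained, purely algebraic argument that needs no topology and no prior form of Hahn--Banach; what the paper's approach buys is the demonstration that the analytic form is a consequence of the geometric form, which is the paper's advertised contribution. If you are asked to reproduce the paper's proof, you should use the separation-theorem route instead.
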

\begin{proof}
Define
$$
U_{n} = \left \{ x \in L : \varphi(x) < \frac{1}{n} \right \}, n \in \mathbb{N}
$$
and
\begin{equation*}
\mathfrak{B} = 
\left \{ V_{n} \in 2^{L} : V_{n} = U_{n} \cap (-U_{n}), n \in \mathbb{N} \right \}. 
\end{equation*}
Note that  $\mathfrak{B}$ is a filter base in $L$ such that 
\begin{itemize}
\item[(a)]
$V_{2n} + V_{2n} \subset V_{n}$, $n \in \mathbb{N}$,
\item[(b)]
every $V_{n}$ is radial and circled. 
\end{itemize} 
By Corollary of Statement 2.1 in \cite{SCH}, p.15, 
the family 
\begin{equation*}
\mathfrak{B} = 
\left \{ V_{n} \in 2^{L} : V_{n} = U_{n} \cap (-U_{n}), n \in \mathbb{N} \right \}
\end{equation*}
is a $0$-neighborhood base for a unique topology $\mathfrak{T}$ 
under which $L$ is a topological vector space. 
Fix an arbitrary vector $w_{0} \in L$. 
Given $\varepsilon >0$ there exists $n_{\varepsilon} \in \mathbb{N}$ such that 
\begin{equation*}
\begin{split}
u \in V_{n_{\varepsilon}} 
\Rightarrow 
\varphi(w_{0} + u) \leq \varphi(w_{0}) + \varphi(u) < \varphi(w_{0}) + \varepsilon.
\end{split}
\end{equation*}
Thus, $\varphi$ is upper semi-continuous at $w_{0}$, 
and since $w_{0}$ is arbitrary it follows that $\varphi$ is upper semi-continuous on $L$. 
Therefore, by Lemma \ref{L_Dominated1.1}, $\varphi$  is continuous on $L$.

Note that if $T : L \times \mathbb{R} \to \mathbb{R}$ is a linear functional, then 
\begin{equation*}
T(x,t) = h(x) + \alpha t, \text{ for all } (x,t) \in L \times \mathbb{R}, 
\end{equation*}
where 
$$
h(x) = T(x, 0),\text{ for all } x \in L  
$$
and $\alpha = T(0, 1)$. 
Hence,
\begin{equation*}
H = \{ (x,t) \in M \times \mathbb{R} : g(x) - t = 1 \}
\end{equation*}
is a linear manifold in the subspace $M \times \mathbb{R}$.  
Since $(x,t) \to g(x) - t$ is a non-zero linear functional,  
$H$ is a hyperplane in $M \times \mathbb{R}$ 
and a linear manifold in $L \times \mathbb{R}$

Since $\varphi$ is a continuous function on $L$ it follows that 
$$
G = \{ (x,t) \in L \times \mathbb{R} : \varphi(x) - t < 1 \}
$$
is an open set and 
$$
G \cap H = \emptyset, 
$$
since $g(x) \leq \varphi(x)$ for $x \in M$. 
It is easy to see that $G$ is also a non-empty convex set.
Therefore, by Theorem \ref{T_Separation} there exists a hyperplane 
$H_{1}$ in $L \times \mathbb{R}$ such that 
$$
H_{1} \supset H
\quad
\text{and}
\quad
H_{1} \cap G = \emptyset.
$$
If we choose a vector $(x_{0}, t_{0}) \in H_{1} \cap M \times \mathbb{R}$, then 
$$
H_{1} \cap (M \times \mathbb{R}) = (x_{0}, t_{0}) + H_{0} \cap (M \times \mathbb{R}), 
$$
where $H_{0}$ is a maximal subspace of $L$ such that 
$$
H_{1} = (x_{0}, t_{0}) + H_{0}.
$$
We have also 
$$
H_{1} \cap (M \times \mathbb{R}) \neq (M \times \mathbb{R}),
$$ 
since $(0,0) \not\in H_{1}$ and $(0,0) \in (M \times \mathbb{R})$.  
Thus, $H_{1} \cap (M \times \mathbb{R})$ is a hyperplane in $M \times \mathbb{R}$, 
and since $H_{1} \cap (M \times \mathbb{R}) \supset H$ 
it follows that
$$
H_{1} \cap (M \times \mathbb{R}) = H.
$$
By Statement 4.1 in \cite{SCH}, p.24, 
we have 
$$
H_{1} = \{ (x,t) \in L \times \mathbb{R}: F(x,t) = 1 \}
$$
for some non-zero linear functional $F:L \times \mathbb{R} \to \mathbb{R}$. 
Now $H_{1} \cap (M \times \mathbb{R}) = H$ implies that 
$$
F(x,0) = g(x), \text{ for all }x \in L;
$$
that is, the linear functional $f(\cdot) = F(\cdot,0)$ is an extension of $g$ to $L$. 
Since $(0,-1) \in H$ it follows that
$$
F(x,t) = f(x) -t,\text{ for }(x,t) \in L \times \mathbb{R}. 
$$

It remains to prove that $f$ "dominated" by $\varphi$. 
It is easy to see that 
\begin{equation}\label{eq_DD1}
f(x)= 1+t \leq \varphi(x), \text{ for all } (x,t) \in H_{1}. 
\end{equation}
While $(x,t) \not\in H_{1}$ implies that there exists a real number 
$r \in \mathbb{R}$ such that $r \neq 1$ and
$$
f(x) - t = r.
$$
Then, $(x, t + r -1 ) \in H_{1}$ and therefore \eqref{eq_DD1} yields
$$
f(x) \leq \varphi(x).
$$
Thus, $f$ "dominated" by $\varphi$ and this ends the proof.
\end{proof}

By virtue of Lemma \ref{L_Dominated1.2}, 
we are now able to present another proof of analytic form of the Hahn-Banach Theorem.

\begin{theorem}\label{T_Dominated1}
Let $M$ be a subspace of a vector space $L$ over $\mathbb{C}$ and 
let $p : L \to [0,+\infty)$  be a semi-norm. 
If $g: M \to \mathbb{C}$ is a linear functional such that 
$g$ "dominated" by $p$, i.e., 
$$
(\forall x \in M)[|g(x)| \leq p(x)],
$$
then there exists a linear functional $f: L \to \mathbb{C}$ such that
\begin{itemize}
\item
$f$ extends $g$ to $L$, i.e., 
$$
f \vert_{M} = g,
$$
\item
$f$ "dominated" by $p$, i.e., 
$$
(\forall x \in L)[|f(x)| \leq p(x)].
$$
\end{itemize}
\end{theorem}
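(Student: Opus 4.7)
The plan is to reduce Theorem \ref{T_Dominated1} to the real-scalar result given by Lemma \ref{L_Dominated1.2}, via the standard Bohnenblust--Sobczyk decomposition. View $L$ as a real vector space $L_{\mathbb R}$ by restricting scalar multiplication to $\mathbb R$, and view $M$ as a real subspace $M_{\mathbb R}$ of $L_{\mathbb R}$. The semi-norm $p$ is still a convex functional on $L_{\mathbb R}$, and it satisfies the positive-homogeneity hypothesis of Lemma \ref{L_Dominated1.2}, since $p(\lambda x)=|\lambda|p(x)=\lambda p(x)$ for every $\lambda\ge 0$.

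Next, I would decompose $g$ into its real and imaginary parts: write $g(x)=g_{1}(x)+ig_{2}(x)$, where $g_{1},g_{2}:M_{\mathbb R}\to\mathbb R$ are $\mathbb R$-linear. Complex linearity of $g$ forces $g(ix)=ig(x)$, which quickly gives $g_{2}(x)=-g_{1}(ix)$, so
\begin{equation*}
g(x)=g_{1}(x)-ig_{1}(ix),\qquad x\in M.
\end{equation*}
Since $|g_{1}(x)|\le|g(x)|\le p(x)$ on $M$, Lemma \ref{L_Dominated1.2} applies to $g_{1}$ (with $\varphi=p$) and yields an $\mathbb R$-linear extension $f_{1}:L_{\mathbb R}\to\mathbb R$ with $f_{1}(x)\le p(x)$ for every $x\in L$. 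Because $p(-x)=p(x)$, one also has $-f_{1}(x)=f_{1}(-x)\le p(-x)=p(x)$, so $|f_{1}(x)|\le p(x)$ on all of $L$.

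Then I would define the candidate extension by
\begin{equation*}
f(x)=f_{1}(x)-if_{1}(ix),\qquad x\in L,
\end{equation*}
and check the three required properties. Additivity and $\mathbb R$-linearity are inherited from $f_{1}$; the key point is $f(ix)=f_{1}(ix)-if_{1}(-x)=f_{1}(ix)+if_{1}(x)=i\,f(x)$, which together with $\mathbb R$-linearity gives full $\mathbb C$-linearity. On $M$, the formula $f(x)=f_{1}(x)-if_{1}(ix)=g_{1}(x)-ig_{1}(ix)=g(x)$ shows $f|_{M}=g$.

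For the domination step, for each $x\in L$ choose $\theta\in\mathbb R$ with $e^{i\theta}f(x)=|f(x)|$; then $|f(x)|=f(e^{i\theta}x)$ is real, so it equals $f_{1}(e^{i\theta}x)$, and hence
\begin{equation*}
|f(x)|=f_{1}(e^{i\theta}x)\le p(e^{i\theta}x)=|e^{i\theta}|\,p(x)=p(x).
\end{equation*}
There is no genuine obstacle beyond executing this reduction correctly: all of the work has been done by Lemma \ref{L_Dominated1.2}; the only care required is in verifying $\mathbb C$-linearity of $f$ and choosing the phase $\theta$ to pass from the one-sided real bound $f_{1}\le p$ to the two-sided complex bound $|f|\le p$.
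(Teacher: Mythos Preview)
Your proof is correct and follows essentially the same approach as the paper: both reduce to Lemma~\ref{L_Dominated1.2} via the Bohnenblust--Sobczyk decomposition $g(x)=g_{0}(x)-ig_{0}(ix)$, extend the real part, rebuild $f(x)=f_{0}(x)-if_{0}(ix)$, and use a phase rotation to obtain $|f|\le p$. Your write-up is in fact slightly more complete, since you verify $\mathbb{C}$-linearity of $f$ explicitly and you cite Lemma~\ref{L_Dominated1.2} (the paper's text inadvertently cites Lemma~\ref{L_Dominated1.1} at that step).
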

\begin{proof}
There exists a linear functional $g_{0} : M \to \mathbb{R}$ such that
\begin{equation*}
g(x) = g_{0}(x) - i g_{0}(ix), \text{ for all } x \in M.
\end{equation*}
We have also that $p$ is a convex functional such that $p(\lambda x) = \lambda p(x)$ 
for every $\lambda \geq 0$ and $x \in L$. 
Thus, regarding $L$ as a real linear space, and applying Lemma \ref{L_Dominated1.1}, 
a real linear function $f_{0} : L \to \mathbb{R}$ is obtained for which 
\begin{equation*}
f_{0}\vert_{M} = g_{0}; \quad 
f_{0}(x) \leq p(x), \text{ for all } x \in L.
\end{equation*}
Let the function $f : L \to \mathbb{C}$ be defined by the 
equation 
\begin{equation*}
f(x) = f_{0}(x) - i f_{0}(i x).
\end{equation*}
Clearly, $f$ is a linear functional. 
For $x$ in $M$, we have
$$
f(x) = f_{0}(x) - i f_{0}(i x) = g_{0}(x) - i g_{0}(i x)= g(x).
$$ 
Thus $f$ is an extension of $g$. 
Finally, let $f(x) = |f(x)| e^{i \theta(x)}$, 
where $\theta(x)$ is an argument of the complex number $f(x)$; 
then
$$
|f(x)| = f_{0} \left ( \frac{x}{e^{i \theta(x)}} \right ) 
\leq 
p \left ( \frac{x}{e^{i \theta(x)}} \right ) = p(x),
$$ 
which proves that $|f( \cdot )| \leq p( \cdot )$ 
and this ends the proof.
\end{proof}

\bibliographystyle{plain}

\end{document}